\documentclass[12pt,a4paper]{amsart}
\usepackage{geometry}
\usepackage{amsmath,amssymb,amsfonts,amscd,amsthm,wasysym,color,enumitem,indentfirst,graphicx,booktabs,mathrsfs}
\usepackage[bookmarksnumbered,colorlinks,linktocpage,plainpages]{hyperref}
\hypersetup{pdfstartview={FitH}}
\usepackage{mathtools}
\usepackage{extarrows}

\numberwithin{equation}{section}
\newtheorem{theorem}{Theorem}[section]

\newtheorem{lemma}[theorem]{Lemma}
\newtheorem{proposition}[theorem]{Proposition}

\theoremstyle{definition}

\newtheorem{remark}[theorem]{Remark}

\allowdisplaybreaks[4]

\makeatletter
\@namedef{subjclassname@2020}{\textup{2020} Mathematics Subject Classification}
\makeatother

\newcommand{\R}{\mathbb R}
\newcommand{\Om}{\Omega}
\newcommand{\forms}{\mathcal{A}}
\newcommand{\cR}{\mathcal R}
\newcommand{\C}{\mathbb C}
\newcommand{\HH}{\mathbb H}
\newcommand{\Oh}{\mathbb O}
\newcommand{\Cl}{C\ell_n}
\newcommand{\A}{\mathcal A}
\newcommand{\Harm}{\mathcal H}

\newcommand{\Sc}{\operatorname{Sc}}

\newcommand{\Div}{\operatorname{div}}
\newcommand{\vol}{\mathrm{vol}}
\newcommand{\DO}{D_{\mathbb O}}
\newcommand{\DOb}{\overline D_{\mathbb O}}
\begin{document}
\title[Regular components and de Rham cohomology]
{Octonionic and Monogenic Completions of Harmonic Functions}
\date{October 21, 2026}

\author{Yong Li}
\address[Yong Li]{School of Mathematics and Statistics, Anhui Normal University, Wuhu 241002, Anhui, People's Republic of China}
\email{leeey@ahnu.edu.cn}

\author{Yuchen Zhang}
\address[Yuchen Zhang]{College of Mathematics and Systems Science, Shandong University of Science and Technology, Qingdao 266590, People's Republic of China}
\email{yuchen95@amss.ac.cn}

\thanks{
This work is supported by the National Natural Science Foundation of China (Grant No. 12501100), the Anhui Provincial Natural Science Foundation (Grant No. 2508085QA031).}

\subjclass[2020]{Primary 30G35 ; Secondary 31B05}
\keywords{Harmonic functions, Clifford module-valued monogenic functions, Octonionic regular functions, Vanishing of de Rham cohomology.  }

\begin{abstract}
Let $\Omega\subset\R^n$ be a domain, with $n\ge2$.
We characterize the real-valued harmonic functions that can
occur as prescribed real components of monogenic functions.
For any fixed nonzero real  Clifford module $M$
and any fixed unit vector $s\in M$, a smooth real-valued
function $h$ can be written as
$h=\langle F,s\rangle$ with $F$ monogenic if and only if
the $(n-1)$-form $*dh$ is exact.
Consequently, every real-valued harmonic function on
$\Omega$ admits such a representation if and only if
$H_{\mathrm{dR}}^{n-1}(\Omega)=0$.
In dimension eight, the same exactness criterion
characterizes the real parts of octonionic regular
functions and yields the corresponding equivalence with
$H_{\mathrm{dR}}^7(\Omega)=0$.
The proofs use global solvability of the Poisson equation
and elementary identities for differential forms.
\end{abstract}
\maketitle

\section{Introduction}

Let $\Omega\subset\R^n$ be a domain, where $n\ge2$, and let
$\Harm(\Omega)$ denote the space of real-valued harmonic
functions on $\Omega$.
In one complex variable, a classical theorem states that
a domain $\Omega\subset\C$ is simply connected if and only if
every $h\in\Harm(\Omega)$ is the real part of a holomorphic
function on $\Omega$.

Analogous questions arise in quaternionic analysis.
N\^ono~\cite{Non86} studied the existence of hyper-conjugate
harmonic functions in connection with domains of holomorphy.
That problem concerns prescribed complex components of
quaternionic functions and differs from the problem of
prescribing only the real part.
Sudbery~\cite{Sud79} showed that every real-valued harmonic
function is locally the real part of a Fueter-regular
function.
More recently, Li and Zhang~\cite{LZh26} obtained the
following global characterization: for a domain
$\Omega\subset\HH$, every real-valued harmonic function
on $\Omega$ is the real part of a Fueter-regular function
if and only if
\[
H_{\mathrm{dR}}^3(\Omega)=0.
\]

The components of octonionic regular functions and
Clifford-module-valued monogenic functions are also
harmonic; see, for example,
\cite{Li24,BDS82,DSS92,Gil91}.
It is therefore natural to ask which harmonic functions
can occur as their real parts or prescribed real
components, and which domains admit such representations
for every harmonic function.

Our main result answers both questions.

\begin{theorem}\label{thm:main}
Let $\Omega\subset\R^n$ be a domain.
\begin{enumerate}
\item
If $n=8$, then
\[
H_{\mathrm{dR}}^7(\Omega)=0
\]
if and only if every $h\in\Harm(\Omega)$ is the real part
of an octonionic regular function on $\Omega$.

\item
Let $n\ge2$.
Fix a nonzero real $\Cl$-module $M$ and a unit
vector $s\in M$.
Then
\[
H_{\mathrm{dR}}^{n-1}(\Omega)=0
\]
if and only if, for every $h\in\Harm(\Omega)$, there exists
an $M$-valued monogenic function $F$ on $\Omega$ such that
\[
h=\langle F,s\rangle.
\]
\end{enumerate}
\end{theorem}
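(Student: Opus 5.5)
The plan is to reduce both parts to a single criterion: for smooth $h$, $h$ is a prescribed real component (real part, resp. $\langle F,s\rangle$) of a regular/monogenic function if and only if $*dh$ is exact. Once this criterion is in hand, the theorem follows quickly. For $h\in\Harm(\Omega)$ we have $d{*dh}=(\Delta h)\,\vol=0$, so $*dh$ is closed. Hence if $H^{n-1}_{\mathrm{dR}}(\Omega)=0$, every $*dh$ is exact and the representation exists.

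For the converse I would show that every closed $(n-1)$-form $\omega$ differs from some $*dh$, with $h$ harmonic, by an exact form. Write $\omega=*\alpha$ with $\alpha$ a closed-under-$d^*$ 1-form, i.e. $\alpha=\sum \alpha_i dx_i$ with $\Div\alpha=0$. The trick is to solve the Poisson equation $\Delta u=\Div\alpha$, which is trivial here, so instead I would proceed as follows. Choose a smooth function $\phi$ on $\Omega$. By the Hodge-type decomposition on open sets, obtained via global solvability of $\Delta u=f$ on any domain (Malgrange), any closed $(n-1)$-form $\omega$ can be written as $*dh+d\beta$ with $\Delta h=0$. Concretely, $*\omega$ is a 1-form $\alpha$ with $d^*\alpha=0$. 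Solve componentwise $\Delta \gamma=\alpha$ for a 1-form $\gamma$, and set $h=d^*\gamma$. Then $\Delta h=d^*\Delta\gamma=d^*\alpha=0$, and
\[
\alpha-dh=(dd^*+d^*d)\gamma-dd^*\gamma=d^*d\gamma ,
\]
so $\omega=*\alpha=*dh\pm d(*d\gamma)$ up to sign. Hence if every harmonic $h$ has exact $*dh$, every closed $(n-1)$-form is exact, which gives $H^{n-1}_{\mathrm{dR}}(\Omega)=0$.

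For the criterion itself in the Clifford case, write $D=\sum e_i\partial_i$ acting on $M$-valued $F$. Given $F$ monogenic with $\langle F,s\rangle=h$, the components $\langle F,e_is\rangle$ should serve to produce a primitive. Since $e_i$ is skew-adjoint on $M$ (choose a $\mathrm{Pin}$-invariant inner product) with $e_i^2=-1$, the vectors $e_is$ are orthonormal and orthogonal to $s$. Pairing $DF=0$ with $e_js$ gives $\partial_j h=\sum_{i\neq j}\langle \partial_iF, e_ie_js\rangle$ up to sign, which expresses $*dh$ as $d$ of the $(n-2)$-form $\sum_{i<j}\pm\langle F,e_ie_js\rangle\, dx_{\widehat{ij}}$. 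This identity shows $*dh$ is exact.

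Conversely, given $*dh=d\eta$, I would construct $F$. A clean way is to take $F=\overline{D}\,(\text{something})$, or more directly to find $G$ with $\Delta G$ prescribed and set $F=\overline D G$ with $\langle F,s\rangle=h$. I would solve $\Delta u=h$ scalar, take $G=u s+\sum_{i<j}\eta'_{ij}e_ie_js$ with corrections solving Poisson equations, and set $F=-DG$, which is automatically monogenic since $D^2=-\Delta$ and $\Delta$ of the corrections can be arranged harmonic. I expect this step to be the main obstacle: the difficulty is making $\langle F,s\rangle$ equal exactly $h$ rather than $h$ plus a term. The exactness of $*dh$ is precisely what kills the bivector contribution to the $s$-component. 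In the octonionic case the same computation works with $e_i$ acting by left multiplication on $\Oh$ and $s=1$, where alternativity ensures $e_i(e_jx)+e_j(e_ix)=-2\delta_{ij}x$. The real part of $e_i\overline{e_j}\,\partial F$ then gives the same form identity, and the converse construction uses $\DOb\DO=\Delta$ componentwise.
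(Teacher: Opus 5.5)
Your reduction to the pointwise criterion is right. Your cohomological step (solve $\Box\gamma=*\omega$ for a one-form and put $h=\delta\gamma$) is the Hodge dual of Lemma~\ref{lem:closed}. Your necessity argument, pairing $D_MF=0$ with $\rho(\mathbf e_j)s$ to get $dh=\pm\delta B$ with $B_{ij}=\langle F,\rho(\mathbf e_i)\rho(\mathbf e_j)s\rangle$, is the paper's argument. The octonionic analogue with $B_{ij}=\langle F,\overline{\mathbf e_i}\mathbf e_j\rangle$ also goes through. You cannot ``choose'' a Pin-invariant inner product, since $\langle F,s\rangle$ refers to the given one, but skew-adjointness is already part of the definition of a Clifford module here.

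So you have shown that if every harmonic function is representable, then $H^{n-1}_{\mathrm{dR}}(\Omega)=0$. The converse needs the sufficiency half of the criterion, and there your construction fails. Take $\Delta u=h$, $G=us+\sum_{i<j}\eta'_{ij}\rho(\mathbf e_i\mathbf e_j)s$ and $F=-D_MG$. Then $D_MF=\Delta G$. Since $\rho(\mathbf e_i\mathbf e_j)$ is skew-adjoint for $i\neq j$, you get $\langle D_MF,s\rangle=\Delta u=h$. Hence $F$ is not monogenic unless $h=0$, however the $\eta'_{ij}$ are chosen. Monogenicity of $-D_MG$ needs $\Delta G=0$, not that $\Delta$ of the corrections be harmonic. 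Moreover $-D_M(us)=-\sum_k\partial_ku\,\rho(\mathbf e_k)s$ has zero $s$-component, so this ansatz does not produce $\langle F,s\rangle=h$ either.

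You also misplace the role of exactness. Bivector terms never contribute to the $s$-component, by skew-adjointness. Exactness is needed for monogenicity, not for the component. For octonions, ``uses $\DOb\DO=\Delta$'' is not a construction. Moreover, $\Oh$ with left multiplications by $\mathbf e_1,\dots,\mathbf e_7$ does not make $\DO$ a Clifford Dirac operator, because the coefficient of $\partial_0$ is the identity. So the octonionic converse is not ``the same computation''.

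The missing idea is the following. Exactness of $*dh$ gives directly only a two-form $A$ with $\delta A=dh$. In the exterior-algebra model of $\Cl$, $h-A$ is not monogenic, since $(d+\delta)(h-A)=-dA$. You must apply Malgrange's theorem a second time to replace $A$ by an exact form. Solve $\Box B=A$ and set $\alpha=\delta B$, so that $\delta d\alpha=dh$ (Lemma~\ref{lem:potential}). Then $G=h-d\alpha$ satisfies $(d+\delta)G=dh-\delta d\alpha=0$. Consequently $F=\rho(G)s$ is $M$-monogenic, and $\langle F,s\rangle=h+\langle\rho(-d\alpha)s,s\rangle=h$.

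For the octonions, take $\delta d\alpha=-dh$, $a=\varphi^{-1}(\alpha)$ and $F=h-\DOb a+\Div\alpha$. Then $\operatorname{Re}F=h$, and $\varphi(\DO F)=dh+\Box\alpha-d\delta\alpha=dh+\delta d\alpha=0$.
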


The principal analytic input is Malgrange's global
solvability theorem for the Laplacian.
It yields the following differential-form characterization
of the relevant cohomology group.

\begin{proposition}\label{prop:cohomology}
Let $\Omega\subset\R^n$ be a domain, with $n\ge2$.
The following conditions are equivalent:
\begin{enumerate}
\item $H_{\mathrm{dR}}^{n-1}(\Omega)=0$;
\item $*dh$ is exact for every $h\in\Harm(\Omega)$.
\end{enumerate}
\end{proposition}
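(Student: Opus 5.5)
The direction (1)$\Rightarrow$(2) is immediate. If $h\in\Harm(\Omega)$, then $d{*dh}=(\Delta h)\,\vol=0$, up to a sign fixed by the convention for $*$. So $*dh$ is a closed $(n-1)$-form, and when $H_{\mathrm{dR}}^{n-1}(\Omega)=0$ it is exact.

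For (2)$\Rightarrow$(1), let $\omega$ be an arbitrary closed smooth $(n-1)$-form on $\Omega$. I will show it is exact in two steps.

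\emph{Step 1: reduce to a coexact representative.} Write $\omega={*\alpha}$, where $\alpha$ is a smooth $1$-form, i.e. a vector field $V=(V_1,\dots,V_n)$. Closedness of $\omega$ means $\Div V=0$. Next I solve the Poisson equation $\Delta u=\Div V$ for $u\in C^\infty(\Omega)$; here I only need $\Delta u=0$, so I may take $u=0$. That choice gains nothing, so instead I aim to split $V=\nabla h+W$ with $h$ harmonic and $*W^\flat$ exact. The key tool is Malgrange's theorem: every elliptic operator with constant coefficients, in particular $\Delta$ and therefore the componentwise Laplacian on forms, is surjective on $C^\infty(\Omega)$ for any open $\Omega\subset\R^n$.

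Concretely, I solve $\Delta\beta=\omega$ componentwise on $(n-1)$-forms using Malgrange. Since $\Delta=-(d\delta+\delta d)$ on $\R^n$ with the flat metric, this gives
\[
\omega=-d(\delta\beta)-\delta(d\beta).
\]
The first term is exact. Put $\eta:=\delta d\beta$. Then $\eta=\omega+d\delta\beta$ is closed, and $\eta$ is coexact, $\eta=\delta\gamma$ with $\gamma=d\beta$ an $n$-form. Writing $\gamma=g\,\vol$ with $g\in C^\infty(\Omega)$, we have $\delta(g\,\vol)=\pm *dg$. So $\eta=\pm *dg$, and closedness of $\eta$ gives $d{*dg}=\pm(\Delta g)\vol=0$, i.e. $g\in\Harm(\Omega)$.

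\emph{Step 2: apply hypothesis (2).} By (2), $*dg$ is exact. Hence $\eta$ is exact, and so is $\omega=-d\delta\beta-\eta$. This proves $H_{\mathrm{dR}}^{n-1}(\Omega)=0$.

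The main point to check carefully is the sign and identity bookkeeping. I need the Hodge Laplacian on forms in Euclidean coordinates to coincide, up to sign, with the componentwise scalar Laplacian, so that Malgrange's scalar theorem applies coefficientwise. I also need $\delta$ on $n$-forms to equal $\pm *d*$ up to sign. Both are standard. Smoothness is automatic because Malgrange's theorem gives $C^\infty$ solutions for $C^\infty$ data. No topological assumption on $\Omega$ beyond being open is needed, which is exactly why this route works for arbitrary domains.
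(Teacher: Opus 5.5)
Your proof is correct and follows essentially the paper's route. You solve the Poisson equation coefficientwise for $\beta$ with $\Delta\beta=\omega$ (the paper's $\Box\eta=\rho$ in Lemma~\ref{lem:closed}), split off the exact part $d\delta\beta$, and recognize the remainder $\delta d\beta$ as $\pm *dg$ with $g$ harmonic; the only cosmetic difference is that you get $\Delta g=0$ from closedness of $\pm *dg$, whereas the paper uses $d\Box=\Box d$. There is a harmless sign slip: from your formula, $\eta=\omega+d\delta\beta$ should read $\eta=-\omega-d\delta\beta$, consistent with your later $\omega=-d\delta\beta-\eta$. The abandoned detour about $u=0$ in Step~1 can simply be deleted.
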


We then prove that, for an individual smooth real-valued
function $h$, exactness of $*dh$ is equivalent to the
existence of the corresponding regular or monogenic
completion.
This also shows that the obstruction is independent of
the choice of Clifford module and prescribed unit vector.

\section{Preliminaries}
\subsection{Differential forms and the Hodge Laplacian}

Equip $\Omega\subset\R^n$ with its Euclidean metric and
the orientation
\[
\vol=dx_1\wedge\cdots\wedge dx_n.
\]
Let $\A^k(\Omega)$ denote the space of smooth real-valued
$k$-forms on $\Omega$.
We use the scalar Laplacian
\[
\Delta=\sum_{j=1}^n\partial_j^2
\]
and write
\[
\Harm(\Omega)
=\{h\in C^\infty(\Omega,\R):\Delta h=0\}.
\]

Our conventions for the Hodge star \cite{Tu11} and the codifferential
are
\[
*^2\big|_{\A^k}=(-1)^{k(n-k)}I,
\qquad
\delta\big|_{\A^k}=(-1)^{n(k+1)+1}*d*.
\]
The Hodge Laplacian is
\[
\Box=d\delta+\delta d=-\Delta,
\]
where the last operator acts coefficientwise.
In particular,
\[
d \Box=\Box d,\qquad \delta \Box=\Box\delta.
\]
For a one-form $\alpha=\sum_j\alpha_j\,dx_j$, set
\[
\Div\alpha=\sum_j\partial_j\alpha_j=-\delta\alpha.
\]

%\subsection{Some differential geometry notations}
%Let $n\ge 2$, $\Om$ is a domain in $\R^n$, with coordinates $x_1,\ldots,x_n$, its Euclidean metric,
%and orientation $\mathrm{vol}=dx_1\wedge\cdots\wedge dx_n$.
% We write $\forms^k(\Om), k=0, 1, \cdots, n$ for the space of smooth and real-valued $k$-forms, and
%  $\Harm(\Om)=\{u\in C^\infty(\Om,\R):\Delta u=0\}$, where
%$\Delta=\sum_{j=0}^7\partial_j^2$ is the Laplace operator.
%
%We recall the following fundamental identities between differential operator $d$ and codifferential operator $\delta$.
%
%\[
% \delta\big|_{\forms^k}=(-1)^{n(k+1)+1}*d*,\qquad
% *^2\big|_{\forms^k}=(-1)^{k(n-k)},\qquad
% \Box=d\delta+\delta d=-\Delta,
%\]
%where the last operator acts coefficientwise. Here $*$ denotes the Hodge star operator, and $\Box$ denotes the Beltrami-Laplace operator on forms. In particular, for
%$\alpha=\sum_{j=1}^n\alpha_jdx_j$, its divengence
%\[
% \Div\alpha=\sum_{j=1}^n\partial_j\alpha_j=-\delta\alpha.
%\]

\subsection{Octonionic regular functions}
The  algebra of   octonions   $\mathbb{O}$  is  a non-commutative, non-associative,  normed 8 dimensional division algebra \cite{CS03} over  $\R$.  Let     $\mathbf{e_0}=1,\mathbf{e_1} \ldots,\mathbf{e_7}$ be its natural basis,  we have $$\mathbf{e}_i\mathbf{e}_j+\mathbf{e}_j\mathbf{e}_i=-2\delta_{ij},\quad i,j=1,\ldots,7.$$
 For $x=x_0+\sum\limits_{i=1}^7x_i\mathbf{e_i}\in \mathbb{O},\quad x_i\in\R$ we define octonion conjugation by $\overline{x}=x_0-\sum\limits_{i=1}^7x_i\mathbf{e_i} $, and real part operator $\operatorname{Re} x=x_0$, $\mathbb{O}$ is a 8-dimensional Euclidean space, under inner product $(p,q)=\operatorname{Re}(p\bar{q}), \quad p,q\in \mathbb{O}$.
The full multiplication table is conveniently encoded in  the Fano plane (see Figure 1 and \cite{Baz05}).
In the Fano plane, the vertices are labeled by  $\mathbf{e_1},\ldots, \mathbf{e_7}$.
Each of the 7 oriented lines gives a quaternionic triple. The
product of any two imaginary units is given by the third unit on the unique line
connecting them, with the sign determined by the relative orientation.

\begin{figure}[ht]
\centering
 \includegraphics[width=6cm]{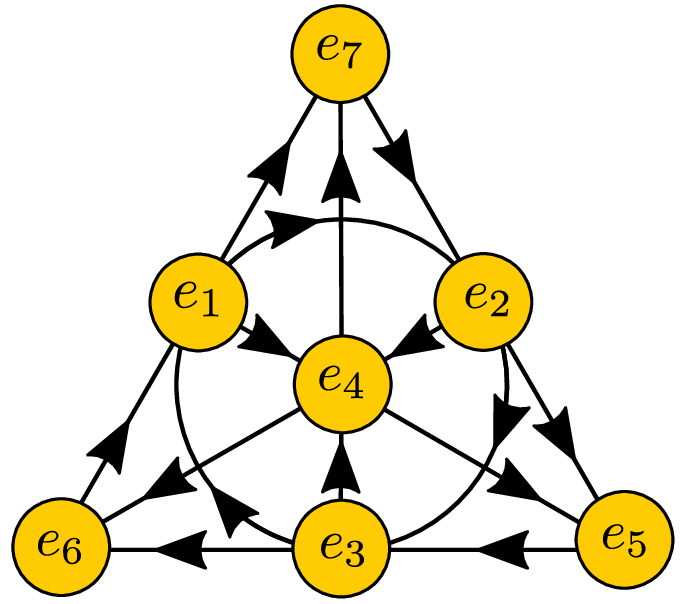}
  \caption{Fano plane}
  \label{fig:1}
\end{figure}
 Define the left differential operators
\[
 D_{\mathbb{O}}F=\sum_{j=0}^7 \mathbf{e}_j(\partial_j F),\qquad
 \overline {D_{\mathbb{O}}} F=\sum_{j=0}^7\overline {\mathbf{e}_j}(\partial_j F).
\]
A smooth function $F:\Om\to\Oh$ is \emph{octonionic regular} if $D_\Oh F=0$;
its space is denoted by $\cR(\Om)$. For completeness, the factorization
\begin{equation*}
\DO\DOb=\DOb\DO=\Delta I
\end{equation*}
holds despite the nonassociativity of $\Oh$.
In particular, every real component of an octonionic
regular function is harmonic.
We identify octonion-valued functions with one-forms by
\[
\varphi:C^\infty(\Omega,\Oh)\longrightarrow\A^1(\Omega),
\qquad
\varphi\left(\sum_{j=0}^7f_je_j\right)
=\sum_{j=0}^7f_j\,dx_j.
\]
For a real-valued function $h$,
\[
\phi(\DO h)=dh.
\]
%
%\begin{itemize}
%\item Let     $\mathbf{e_0}=1,\mathbf{e_1} \ldots,\mathbf{e_7}$ be its natural basis,  we have $$\mathbf{e_i}\mathbf{e_j}+\mathbf{e_j}\mathbf{e_i}=-2\delta_{ij},\quad i,j=1,\ldots,7.$$
%\item For $x=x_0+\sum_{i=1}^7x_i\mathbf{e_i}\in \mathbb{O},\quad x_i\in\R$ we define octonion conjugation by $\overline{x}=x_0-\sum_{i=1}^7x_i\mathbf{e_i} $, and real part operator $\operatorname{Re} x=x_0$, $\mathbb{O}$ is a 8-dimensional Euclidean space, under inner product $(p,q)=\operatorname{Re}(p\bar{q}), \quad p,q\in \mathbb{O}$.
%\item The full multiplication table is conveniently encoded in  the Fano plane (see Figure 1 and \cite{Baez}).
%In the Fano plane, the vertices are labeled by  $\mathbf{e_1},\ldots, \mathbf{e_7}$.
%Each of the 7 oriented lines gives a quaternionic triple. The
%product of any two imaginary units is given by the third unit on the unique line
%connecting them, with the sign determined by the relative orientation.
%
%\begin{figure}[ht]
%\centering
% \includegraphics[width=6cm]{Fano.jpg}
%  \caption{Fano plane}
%  \label{fig:1}
%\end{figure}
%
%
%
%\item Define the left differential operators
%\[
% DF=\sum_{j=0}^7 \mathbf{e_j}(\partial_j F),\qquad
% \overline D F=\sum_{j=0}^7\overline {\mathbf{e_j}}(\partial_j F).
%\]
%A smooth function $F:\Om\to\Oh$ is \emph{left regular} if $DF=0$;
%its space is denoted by $\cR(\Om)$.
%\end{itemize}

\subsection{Clifford module-valued monogenic functions}

For convenice, we  use the explicit construction of Clifford algebra $C\ell_n$ by the exterior algebra \cite[P.14 Second Construction]{Gil91}.
Set $$\Lambda^k(\R^n)^*=\operatorname{Span}_{\R}\{dx_{i_1}\wedge dx_{i_2}\wedge\cdots \wedge dx_{i_k}, 1\le i_1<i_2<\cdots<i_k\le n.\} $$
As a vector space,
\[
\Cl\cong\Lambda^\bullet(\R^n)^*
=\bigoplus_{k=0}^n\Lambda^k(\R^n)^*.
\]
 The basis unit $\mathbf{e}_i=dx_i, i=1,\cdots,n$, and under this identification left multiplication by $\mathbf{e}_i$
is
 $$\mathbf{e}_i\cdot=\wedge_{dx_i}-\iota_{\partial_i}, i=1,\cdots,n,$$
 where the $\wedge_{dx_i}$ is the exterior multiplication operator and $\iota_{\partial_i}$ is the interior multiplication
 \begin{align*}
   \wedge_{dx_i}(dx_{i_1}\wedge dx_{i_2}\wedge\cdots \wedge dx_{i_k})= & \ \ dx_i\wedge dx_{i_1}\wedge dx_{i_2}\wedge\cdots \wedge dx_{i_k}  \\
  \iota_{\partial_i}(dx_{i_1}\wedge dx_{i_2}\wedge\cdots \wedge dx_{i_k}) = & \ \ \sum_{j=1}^k(-1)^{j-1}\delta_{i{i_j}}dx_{i_1}\wedge\cdots \wedge \widehat{dx_{i_j}} \wedge \cdots \wedge dx_{i_k}.
 \end{align*}
 Thus a smooth $\Cl$-valued function is identified with an
inhomogeneous differential form.
For this regular module,
\begin{equation}\label{eq:clifford-dirac}
D_{\Cl}=\sum_{j=1}^n\mathbf{e}_j\partial_j=d+\delta.
\end{equation}

For a real Hilbert space $M$, $M$ is called a Clifford module\cite[P.60 Definition 7.44]{Gil91}, if there is a ring homomorphism $\rho:C\ell_n\to \operatorname{End}_{\R}(M)$ and $\rho(\mathbf{e}_i), i=1,\cdots, n$ is skew-adjoint, i.e, $\rho(\mathbf{e}_i)^*=-\rho(\mathbf{e}_i), i=1,\cdots, n$.

The associated Dirac operator is
 \[
D_MF=\sum_{j=1}^n\rho(\mathbf{e}_j)\partial_jF,
\qquad F\in C^\infty(\Omega,M).
\]
A smooth $M$-valued function is \emph{monogenic} if
$D_MF=0$.
The Clifford relations imply
\[
D_M^2=-\Delta I.
\]
Consequently, every continuous real-linear component of a
monogenic function is harmonic.
For general background, see
\cite{Aty64,BDS82,DSS92,Gil91,LM89,Har90}.

\section{Global Poisson solvability and exactness}
We use the following consequence of Malgrange's global
solvability theorem for elliptic operators;
see \cite[Chapter III]{Mal56} and \cite[P.59-P.60]{CSS04}.

\begin{theorem}[Malgrange]\label{thm:malgrange}
Let $\Om \subset\R^n$ be a domain, $n\geq2$. The operator
\[
 \Delta: C^\infty(\Om,\R)\longrightarrow C^\infty(\Om,\R)
\]
is surjective.
\end{theorem}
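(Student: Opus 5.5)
The plan is the classical Mittag-Leffler/Runge scheme for elliptic operators with constant coefficients. We build a solution of $\Delta u=f$ on an exhaustion of $\Omega$ by compacts and glue the local solutions together, correcting the errors by harmonic functions defined on all of $\Omega$.

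\textbf{Step 1: exhaustion.} Choose compact sets $K_1\subset \mathrm{int}\,K_2\subset K_2\subset\cdots$ with $\bigcup_j K_j=\Omega$. Fill the holes so that $\Omega\setminus K_j$ has no relatively compact connected component in $\Omega$; concretely, add to $K_j$ every component of $\Omega\setminus K_j$ whose closure is compact in $\Omega$. This topological condition is what the approximation step needs.

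\textbf{Step 2: local solutions.} Fix $f\in C^\infty(\Omega,\R)$ and a cutoff $\chi_j\in C_c^\infty(\Omega)$ with $\chi_j=1$ near $K_{j+1}$. Let $E$ be the fundamental solution of $\Delta$ (a multiple of $|x|^{2-n}$, or $\log|x|$ when $n=2$). Set $u_j=E*(\chi_j f)$. Then $u_j\in C^\infty(\R^n)$ and $\Delta u_j=f$ near $K_{j+1}$, so $w_j:=u_{j+1}-u_j$ is harmonic on a neighbourhood of $K_{j+1}$.

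\textbf{Step 3: Runge approximation (the main obstacle).} The claim is that every function harmonic on a neighbourhood of $K_j$ can be approximated uniformly on $K_j$ by functions in $\Harm(\Omega)$. I would prove this by Hahn--Banach.

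1. Let $\mu$ be a measure supported on $K_j$ that annihilates $\Harm(\Omega)$. In particular $\mu$ annihilates $x\mapsto E(x-y)$ for every $y\notin\Omega$, and also its derivatives in $y$.
2. Set $v=E*\mu$. Then $v$ is real-analytic off $K_j$.
3. Every component of $\R^n\setminus K_j$ either meets $\R^n\setminus\Omega$ or is unbounded. On a component meeting $\R^n\setminus\Omega$, $v$ vanishes to infinite order at a point of $\R^n\setminus\Omega$, so unique continuation gives $v\equiv0$ on that component. On the unbounded component, vanishing follows from the annihilation of polynomial harmonics together with the decay of $v$ at infinity; the case $n=2$ needs a small separate argument because of the logarithm.
4. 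Hence $v$ has support in $K_j$.
5. For $w$ harmonic near $K_j$, take a cutoff $\psi=1$ near $K_j$. Then $\int w\,d\mu=\int w\,\Delta v=\int \Delta(\psi w)\,v=0$, because $\Delta(\psi w)$ vanishes near $\mathrm{supp}\,v$.

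Once this works, the interior estimates for harmonic functions (Weyl's lemma and the Cauchy estimates) upgrade uniform approximation on $K_{j+1}$ to $C^k$ approximation on $K_j$.

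\textbf{Step 4: telescoping.} For each $j$, pick $h_j\in\Harm(\Omega)$ with $\sup_{K_{j+1}}|w_j-h_j|<2^{-j}$, and define
\[
u=\lim_{j\to\infty}\Bigl(u_j-\sum_{i<j}h_i\Bigr).
\]
On $K_m$ the terms with $i\ge m$ form a series of harmonic functions converging uniformly. By the interior estimates it converges in $C^\infty(K_{m-1})$. Therefore $u\in C^\infty(\Omega)$ and $\Delta u=f$ on each $K_m$, hence on all of $\Omega$. The only genuinely delicate step is Step 3, the duality argument combined with unique continuation on the components of the complement.
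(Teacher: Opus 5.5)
Your argument is correct. It differs from the paper only in that the paper does not prove this statement at all: it quotes Malgrange's theorem for elliptic operators with constant coefficients (Malgrange's 1956 work, and the account in Colombo--Sabadini--Sommen--Struppa) as a black box.

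What you supply is a self-contained proof of the special case $\Delta$, along the classical Runge/Mittag-Leffler lines:
\begin{enumerate}
\item a hole-filled exhaustion of $\Omega$ by compacts $K_j$;
\item local solutions $E*(\chi_j f)$;
\item a Hahn--Banach approximation theorem for harmonic functions;
\item a telescoping correction by functions in $\Harm(\Omega)$.
\end{enumerate}
The citation buys generality (every constant-coefficient elliptic operator) and brevity. Your route buys independence from the general theory. The price is that you use features specific to $\Delta$: the explicit fundamental solution, real-analyticity for the unique-continuation step, and harmonic expansions at infinity.

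One step can be tightened. On the unbounded component of $\R^n\setminus K_j$ you do not need a decay argument. Take $|y|>\max_{x\in K_j}|x|$. Then $x\mapsto E(x-y)$ is harmonic on a ball containing $K_j$, so its expansion into homogeneous harmonic polynomials converges uniformly on $K_j$. Every harmonic polynomial, and the constant $1$, lies in $\Harm(\Omega)$, so you get $v(y)=0$ directly. This treats $n=2$ the same way: there the extra term $\log|y|\,\mu(K_j)$ vanishes because $\mu(K_j)=\int 1\,d\mu=0$.

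It is also worth stating explicitly why hole-filling gives the topological property you use in Step 3. A bounded component of $\R^n\setminus K_j$ contained in $\Omega$ would be a component of $\Omega\setminus K_j$ with compact closure in $\Omega$, and hole-filling has excluded exactly those.
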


Applying Theorem~\ref{thm:malgrange} to each coefficient shows that
$\Box:\forms^k(\Om)\to\forms^k(\Om)$ is also surjective for every $k$.
This assertion imposes no boundary conditions.

\begin{lemma}\label{lem:closed}
Let $\Om\subset\R^n$ be a domain, $n\geq2$. Every \emph{closed}
$(n-1)$-form $\rho$ on $\Om$ can be written as
\[
 \rho=d\theta-*dh,
 \qquad \theta\in\forms^{n-2}(\Om),\quad h\in\Harm(\Om).
\]
\end{lemma}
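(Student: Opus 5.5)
Given a closed $(n-1)$-form $\rho$, we solve a Poisson equation for a primitive-type form and then split the result into an exact part and a harmonic part. By Theorem~\ref{thm:malgrange}, applied coefficientwise, we choose $\beta\in\A^{n-1}(\Omega)$ with $\Box\beta=\rho$. Then
\[
\rho=d\delta\beta+\delta d\beta .
\]
The first term is exact, with $\theta:=\delta\beta\in\A^{n-2}(\Omega)$. It remains to show that $\delta d\beta$ has the form $-*dh$ for some harmonic $h$.

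First note that $d\beta$ is an $n$-form, so we may write $d\beta=g\,\vol$ with $g\in C^\infty(\Omega,\R)$, that is, $g=*d\beta$. With the stated conventions, $\delta$ on $n$-forms equals $\pm *d*$, so $\delta(g\,\vol)=\pm *dg$. We choose the sign of $h=\pm g$ so that $\delta d\beta=-*dh$; this sign is a routine check with $\delta|_{\A^n}=(-1)^{n(n+1)+1}*d*=-*d*$. Since $*\vol=1$, we get $\delta(g\,\vol)=-*dg$, so $h=g$ works.

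Next we show that $h$ is harmonic, and this is where closedness of $\rho$ enters. Since $d\rho=0$ and $d$ commutes with $\Box$,
\[
0=d\rho=d\Box\beta=\Box d\beta=\Box(g\,\vol)=(-\Delta g)\,\vol,
\]
because $\Box$ acts coefficientwise as $-\Delta$. Hence $\Delta h=0$, and so $h\in\Harm(\Omega)$.

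Combining the two pieces gives $\rho=d\theta-*dh$, as required. The only delicate point is tracking the signs of $\delta$ and $*$. Should the sign come out opposite, we simply replace $h$ by $-h$, which is still harmonic. The substantive content is the surjectivity of $\Box$, and it needs no boundary conditions.
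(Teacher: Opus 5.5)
Your proof is correct and follows the paper's argument essentially step for step: solve $\Box\beta=\rho$ via Malgrange, take $\theta=\delta\beta$, write $d\beta=h\,\vol$, use $\delta|_{\A^n}=-*d*$ to get $\delta d\beta=-*dh$, and deduce $\Delta h=0$ from $\Box d\beta=d\Box\beta=d\rho=0$. Your sign check already settles the sign, so the closing remark about possibly replacing $h$ by $-h$ is unnecessary.
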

\begin{proof}
Choose $\eta\in\forms^{n-1}(\Om)$ such that $\Box\eta=\rho$.
There is a real smooth function $h$ with $d\eta=h\,\mathrm{vol}$.
Since $d$ commutes with $\Box$ and $d\rho=0$,
\[
 -(\Delta h)\,\mathrm{vol}
 =\Box(h\,\mathrm{vol})
 =\Box(d\eta)=d(\Box\eta)=d\rho=0.
\]
Thus $h$ is harmonic. On top-degree forms our convention gives
$\delta(h\,\mathrm{vol})=-*dh$, so
\[
 \rho=\Box\eta=d(\delta\eta)+\delta(d\eta)
 =d(\delta\eta)-*dh.
\]
Take $\theta=\delta\eta$.
\end{proof}

\begin{proof}[Proof of Proposition~\ref{prop:cohomology}]
Assume (1). For harmonic $h$,
\[
 d(*dh)=(\Delta h)\,\mathrm{vol}=0.
\]
Thus $*dh$ is a closed $(n-1)$-form and hence is exact.

Conversely, assume (2) and let $\rho$ be any closed $(n-1)$-form.
By Lemma~\ref{lem:closed}, write $\rho=d\theta-*dh$ with $h$ harmonic.
Choose $\gamma\in\forms^{n-2}(\Om)$ such that $*dh=d\gamma$. Then
$\rho=d(\theta-\gamma)$. Every closed $(n-1)$-form is therefore exact,
which proves (1).
\end{proof}
The following lemma charaterized the condition $*dh$ is exact, which is prepared for the development of the subsequent text.

\begin{lemma}\label{lem:potential}
Let $\Om \subset\R^n$, $n\geq2$, be a domain and $\beta\in\forms^1(\Om)$.
The following are equivalent:
\begin{enumerate}
\item $*\beta$ is exact.
\item There exists $A\in\forms^2(\Om)$ such that $\beta=\delta A$.
\item There exists $\alpha\in\forms^1(\Om)$ such that $\beta=\delta d\alpha$.
\end{enumerate}
\end{lemma}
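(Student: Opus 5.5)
The plan is to prove the cycle (1)$\Rightarrow$(2)$\Rightarrow$(3)$\Rightarrow$(1), using only the relation between $*$ and $\delta$ together with the surjectivity of $\Box$ (Theorem~\ref{thm:malgrange}, applied coefficientwise).

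For (1)$\Rightarrow$(2), suppose $*\beta=d\gamma$ with $\gamma\in\forms^{n-2}(\Om)$. Set $A=c\,*\gamma\in\forms^2(\Om)$ with a sign $c=\pm1$ to be fixed. On $2$-forms,
\[
\delta A=(-1)^{3n+1}*d*A=\pm *d*(*\gamma)=\pm *d\gamma=\pm **\beta=\pm\beta ,
\]
where $**\gamma=(-1)^{(n-2)\cdot 2}\gamma=\gamma$ and $**\beta=(-1)^{n-1}\beta$. The only work is bookkeeping these signs to choose $c$ so that $\delta A=\beta$. Conversely, this computation shows that if $\beta=\delta A$ then $*\beta=\pm *{*d*A}=\pm d(*A)$ up to sign, so $*\beta$ is exact. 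This gives (2)$\Rightarrow$(1) directly, which is convenient for closing the cycle.

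For (2)$\Rightarrow$(3), given $\beta=\delta A$, use surjectivity of $\Box$ on $\forms^2$ to pick $B\in\forms^2(\Om)$ with $\Box B=A$. Then
\[
\beta=\delta A=\delta(d\delta B+\delta dB)=\delta d(\delta B),
\]
since $\delta^2=0$. So $\alpha=\delta B\in\forms^1(\Om)$ works. Finally, (3)$\Rightarrow$(2) is immediate with $A=d\alpha$, and combining this with (2)$\Rightarrow$(1) closes the cycle.

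Nothing here is deep. The only place an error could creep in is the sign bookkeeping with the paper's convention $\delta|_{\A^k}=(-1)^{n(k+1)+1}*d*$ and $*^2=(-1)^{k(n-k)}$. However, exactness is insensitive to an overall sign, since $A$ may be replaced by $-A$, so no delicate step is expected. The one genuinely analytic input is Malgrange's theorem, which is needed in (2)$\Rightarrow$(3) to write an arbitrary $2$-form as $\Box B$ with no boundary conditions.
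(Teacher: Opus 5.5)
Your proposal is correct and follows the paper's proof essentially step for step. The paper packages the sign bookkeeping as the single identity $*\delta A=d(*A)$ for $2$-forms, which amounts to your computation with $c=1$, i.e.\ $A=*\gamma$. For (2)$\Rightarrow$(3) it likewise picks $B$ with $\Box B=A$ via Malgrange and sets $\alpha=\delta B$, so leaving $c$ unresolved is harmless, as you note.
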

\begin{proof}
For any two-form $A$, the Hodge-star identities yield
\begin{equation}\label{eq:star-delta}
 *\delta A=d(*A).
\end{equation}
Indeed, the sign from $\delta A=(-1)^{3n+1}*d(*A)$ cancels the sign
$(-1)^{n-1}$ from applying $*$ a second time to the $(n-1)$-form $d(*A)$.

If $*\beta=d\gamma$, with $\gamma\in\forms^{n-2}(\Om)$, set $A=*\gamma$.
Since $**\gamma=\gamma$, equation~\eqref{eq:star-delta} gives
$*\delta A=d\gamma=*\beta$, proving (1)$\Rightarrow$(2).
Conversely, the same equation proves (2)$\Rightarrow$(1).

Suppose (2) holds. Choose $B\in\forms^2(U)$ with $\Box B=A$ and set
$\alpha=\delta B$. Using $\delta^2=0$ gives
\[
 \delta d\alpha=\delta d\delta B
 =\delta(d\delta+\delta d)B=\delta A=\beta.
\]
Thus (2)$\Rightarrow$(3), while (3)$\Rightarrow$(2) follows by taking $A=d\alpha$.
\end{proof}

\section{Real part of octonionic regular functions}
In view of Proposition \ref{prop:cohomology}, the first part of Theorem \ref{thm:main} can be deduced to the following theorem.
\begin{theorem}\label{thm:octonion-criterion}
Let $\Om\subset\R^8$ be a domain and $h\in C^\infty(\Om,\R)$.
There exists $F\in\cR(\Om)$ with $\operatorname{Re} F=h$ if and only if $*dh$ is exact.
In either case $h$ is harmonic.
\end{theorem}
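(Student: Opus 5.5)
The idea is to look for $F$ of the form $F=\DOb g$ with $g$ octonion-valued, and to use Lemma~\ref{lem:potential} to translate exactness of $*dh$ into a condition on one-forms.

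\emph{Sufficiency.} Suppose $*dh$ is exact. Lemma~\ref{lem:potential}, applied with $\beta=dh$, gives $\alpha\in\forms^1(\Om)$ with $dh=\delta d\alpha$. Then $\Box\alpha=d\delta\alpha+\delta d\alpha$, so
\[
d(h-\delta\alpha)=\delta d\alpha-d\delta\alpha+\dots
\]
does not simplify directly. I would therefore set $u=h+\delta\alpha$, which gives $du=dh+d\delta\alpha=\Box\alpha=-\Delta\alpha$. Now let $g=\varphi^{-1}(\alpha)$ be the octonion-valued function $\sum_j\alpha_je_j$, and take as candidate
\[
F=-\DOb g,
\]
possibly adjusted by a real function. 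Since $\DO\DOb=\Delta I$ holds componentwise, $\DO F=-\Delta g$, and $\varphi(-\Delta g)=du=\varphi(\DO u)$. Hence $F':=-\DOb g-u$ satisfies $\DO F'=-\Delta g-\DO u=0$.

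The real part of $\DOb g$ is $\sum_j\partial_j\alpha_j=\Div\alpha=-\delta\alpha$. Therefore
\[
\operatorname{Re}F'=\delta\alpha-u=-h,
\]
and $-F'$ is the required regular function with real part $h$. I need to check the signs carefully: $\operatorname{Re}(\bar e_j e_j\,\cdot)$ and the conjugation in $\DOb$ give $\operatorname{Re}(\DOb g)=\partial_0 g_0+\sum_{j\ge1}\partial_j g_j$, since $\bar e_j e_j=1$. The essential point is that $u$ is real, so $\DO u=\varphi^{-1}(du)$ as stated in the preliminaries.

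\emph{Necessity.} Let $F=\sum f_je_j$ be regular with $f_0=h$. Each component is harmonic because $\DOb\DO=\Delta$, so in particular $h$ is harmonic. This also settles the final claim of the theorem in either direction, since sufficiency needs $*dh$ closed, which forces $d*dh=\Delta h\,\vol=0$. Write $\DO F=0$ out in components. The real component says $\Div\varphi(F)$, with the appropriate signs on the $x_0$ term, vanishes. The imaginary components say $\partial_0 f_k+e_k$-coefficient$\cdot\partial_k h$ plus sums of $\partial_i f_j$ over Fano triples vanish.

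The goal is to exhibit $dh=\delta A$ for a two-form $A$ built linearly from the $f_j$: something like $A=\sum_k f_k\,dx_0\wedge dx_k$ plus $\tfrac12\sum c_{ijk}f_k\,dx_i\wedge dx_j$, where $c_{ijk}$ are the octonion structure constants. Exactness of $*dh$ then follows from Lemma~\ref{lem:potential}, implication (2)$\Rightarrow$(1). Matching the components of $\delta A$ with the component equations of $\DO F=0$ is a finite, explicit check using the Fano plane.

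I expect this necessity step to be the main obstacle. The tempting shortcut of writing $dh=\varphi(\DO h)$ does not give $\delta A$ directly, because the relation between $\DO$ and $d+\delta$ fails for octonions: octonion multiplication is not the Clifford action. So the structure-constant bookkeeping, in particular the antisymmetry of $c_{ijk}$ needed for $A$ to be a genuine two-form, must be verified by hand. If this becomes messy, I would fall back on the observation that the imaginary-part equations give $\partial_k h=\partial_0 f_k+\sum c_{ijk}\partial_i f_j$. Each term is a divergence-type expression of an antisymmetric array ($f_k$ paired with $x_0$, and $c_{ijk}f_k$ with indices $i,j$), which is exactly the form of $\delta A$ coefficientwise.
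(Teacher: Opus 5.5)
Your argument is correct and is essentially the paper's. Your completion $-F'=\DOb g+u$ is exactly the paper's $h-\DOb a+\Div\alpha$ after replacing $\alpha$ by $-\alpha$, and your two-form $A$ is the paper's $W_v=\sum_j v_j\omega_j$ from Lemma~\ref{lem:cayley}, used with \eqref{eq:star-delta}.

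When you carry out the check, write $e_ie_j=\sum_k c_{ijk}e_k$. The component equations of $\DO F=0$ are then $\partial_k h=-\partial_0 f_k-\sum_{i,j}c_{ijk}\partial_i f_j$ and $\partial_0 h=\sum_{k\ge1}\partial_k f_k$. So the structure-constant part of $A$ needs a minus sign, as in your fallback equation, which has the wrong sign on both terms:
\[
A=\sum_k f_k\,dx_0\wedge dx_k-\tfrac12\sum_{i,j,k}c_{ijk}f_k\,dx_i\wedge dx_j.
\]
Its antisymmetry follows from the total antisymmetry of $c_{ijk}$.
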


We first introduce seven constant two-forms.
\[
\begin{aligned}
\omega_1&=dx_0\wedge dx_1-dx_2\wedge dx_3-dx_4\wedge dx_5+dx_6\wedge dx_7,\\
\omega_2&=dx_0\wedge dx_2+dx_1\wedge dx_3-dx_4\wedge dx_6-dx_5\wedge dx_7,\\
\omega_3&=dx_0\wedge dx_3-dx_1\wedge dx_2-dx_4\wedge dx_7+dx_5\wedge dx_6,\\
\omega_4&=dx_0\wedge dx_4+dx_1\wedge dx_5+dx_2\wedge dx_6+dx_3\wedge dx_7,\\
\omega_5&=dx_0\wedge dx_5-dx_1\wedge dx_4+dx_2\wedge dx_7-dx_3\wedge dx_6,\\
\omega_6&=dx_0\wedge dx_6-dx_1\wedge dx_7-dx_2\wedge dx_4+dx_3\wedge dx_5,\\
\omega_7&=dx_0\wedge dx_7+dx_1\wedge dx_6-dx_2\wedge dx_5-dx_3\wedge dx_4.
\end{aligned}
\]
Equivalently,
\begin{equation*}
\omega_j
=\frac12\sum_{i,k=0}^7
\langle \mathbf{e}_k,\mathbf{e}_i \mathbf{e}_j\rangle\,dx_i\wedge dx_k.
\end{equation*}
Actually, by the definition of $\omega_j, j=1,\cdots, 7$ , $\omega_j$ satisfies the following properties.
\begin{lemma}\label{lem:cayley}
For every real-valued smooth function $f$ and $1\leq j\leq7$,
\[
 \varphi^{-1}\bigl(\delta(f\omega_j)\bigr)=-(D_{\mathbb{O}}f)e_j.
\]
Hence, for $v=\sum\limits_{j=1}^7v_j\mathbf{e}_j, $ and
$W_v=\sum\limits_{j=1}^7v_j\omega_j$, one has
\[
 \varphi(D_{\mathbb{O}}v)=-\delta W_v,
\]
where $v_j\in C^\infty(\Om, \mathbb{R}),j=1,\cdots,7$.
\end{lemma}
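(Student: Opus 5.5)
The proof reduces to a direct coefficient computation; the only real input is the multiplication table encoded in the $\omega_j$. First I fix the formula for $\delta$ on a two-form in flat $\R^8$. From $\delta\big|_{\A^2}=(-1)^{n(k+1)+1}*d*$ with $n=8$, $k=2$ (so $\delta=-*d*$), and by checking one basis element, I get for $A=\tfrac12\sum_{i,k}A_{ik}\,dx_i\wedge dx_k$ with $A_{ik}=-A_{ki}$:
\[
\delta A=-\sum_{k}\Bigl(\sum_i \partial_i A_{ik}\Bigr)dx_k .
\]
This is the familiar statement that $\delta=-\Div$ in each slot, consistent with $\Div\alpha=-\delta\alpha$ on one-forms. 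Since the $\omega_j$ are constant, taking $A=f\omega_j$ gives $A_{ik}=f\langle \mathbf e_k,\mathbf e_i\mathbf e_j\rangle$. Therefore
\[
\delta(f\omega_j)=-\sum_k\sum_i(\partial_i f)\langle \mathbf e_k,\mathbf e_i\mathbf e_j\rangle\,dx_k .
\]

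Next I compare this with $(D_{\Oh} f)\mathbf e_j$. Because $f$ is real, $D_{\Oh}f=\sum_i(\partial_i f)\mathbf e_i$, so $(D_{\Oh}f)\mathbf e_j=\sum_i(\partial_i f)\,\mathbf e_i\mathbf e_j$. No associativity issue arises here, since only a product of two octonions appears. Expanding $\mathbf e_i\mathbf e_j=\sum_k\langle\mathbf e_k,\mathbf e_i\mathbf e_j\rangle \mathbf e_k$ in the orthonormal basis and applying $\varphi$ gives
\[
\varphi\bigl((D_{\Oh}f)\mathbf e_j\bigr)=\sum_k\sum_i(\partial_if)\langle \mathbf e_k,\mathbf e_i\mathbf e_j\rangle dx_k=-\delta(f\omega_j),
\]
which is the first claim.

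For the consequence I use real-linearity of $\varphi$ and $\delta$. Since each $v_j$ is real, $D_{\Oh}v=\sum_j D_{\Oh}(v_j\mathbf e_j)=\sum_j (D_{\Oh}v_j)\mathbf e_j$. This needs the small observation $\mathbf e_i(\partial_i v_j\,\mathbf e_j)=(\partial_iv_j)\mathbf e_i\mathbf e_j$, which holds because real scalars commute and associate with everything. Summing the first identity over $j$ then gives $\varphi(D_{\Oh}v)=-\delta W_v$.

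The main obstacle is consistency of conventions. The two sides must agree on three things: the antisymmetry and normalization of the coefficients of $\omega_j$ (the factor $\tfrac12$ in the equivalent formula, and the fact that $\langle\mathbf e_k,\mathbf e_i\mathbf e_j\rangle$ is antisymmetric in $i,k$); the overall sign of $\delta$ on two-forms under the stated Hodge convention; and agreement of the explicit list of $\omega_j$ with the Fano-plane table. For the last point I would verify antisymmetry in $(i,k)$ from the alternative identity $\langle \mathbf e_k,\mathbf e_i\mathbf e_j\rangle=-\langle \mathbf e_i,\mathbf e_k\mathbf e_j\rangle$, which follows from $\mathbf e_j$ being an imaginary unit: right multiplication by it is skew-adjoint. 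I would then spot-check one $\omega_j$, say $\omega_1$, against the displayed formula, and check the sign of $\delta$ on $f\,dx_0\wedge dx_1$ directly.
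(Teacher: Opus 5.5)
Your proposal is correct and follows essentially the same route as the paper. Both arguments write $\omega_j=\tfrac12\sum_{i,k}\langle\mathbf{e}_k,\mathbf{e}_i\mathbf{e}_j\rangle\,dx_i\wedge dx_k$ with antisymmetric coefficients, compute $\delta(f\omega_j)=-\sum_{i,k}(\partial_i f)\langle\mathbf{e}_k,\mathbf{e}_i\mathbf{e}_j\rangle\,dx_k$ coefficientwise, recognize the right-hand side as $-\varphi\bigl((D_{\mathbb{O}}f)\mathbf{e}_j\bigr)$, and then sum over $j$. Your extra remarks fill in points the paper leaves implicit: the antisymmetry in $(i,k)$ follows from skew-adjointness of right multiplication by $\mathbf{e}_j$, and $D_{\mathbb{O}}(v_j\mathbf{e}_j)=(D_{\mathbb{O}}v_j)\mathbf{e}_j$ holds because real scalars associate.
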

\begin{proof}
Write $\omega_j=\frac12\sum\limits_{i,k=0}^7c^{(j)}_{ik}dx_i\wedge dx_k$, where
$c^{(j)}_{ik}=-c^{(j)}_{ki}$. The multiplication convention gives
\[
 c^{(j)}_{ik}=\langle \mathbf{e}_k,\mathbf{e}_i\mathbf{e}_j\rangle, \ \ \ \ i,k=0,\cdots 7, j=1,\cdots,7.
\]
For $i,k\geq1$ this also follows from the total antisymmetry of the
imaginary octonion structure constants; the cases with $i=0$ or $k=0$
follow from $\mathbf{e}_0=1$ and $\mathbf{e}_j^2=-1$.
Since
\[
 \delta(f\,dx_i\wedge dx_k)
 =\partial_kf\,dx_i-\partial_if\,dx_k,
\]
we obtain
\[
 \delta(f\omega_j)
 =-\sum_{i,k=0}^7\partial_if\,c^{(j)}_{ik}dx_k
 =-\varphi\left(\left(\sum_{i=0}^7\partial_if\,\mathbf{e}_i\right)\mathbf{e}_j\right).
\]
Summation over $j$ gives the second identity.
\end{proof}
Now we return to the proof of Theorem \ref{thm:octonion-criterion}.

\begin{proof}[Proof of Theorem \ref{thm:octonion-criterion}]
Suppose first that $F=h+v$ is regular, where
$v=\sum\limits_{j=1}^7v_j\mathbf{e}_j$ is purely imaginary with $v_j\in C^\infty(\Om, \mathbb{R}),j=1,\cdots,7$. By Lemma~\ref{lem:cayley},
\[
 0=\varphi(D_{\mathbb{O}}F)=dh-\delta W_v,
 \qquad W_v=\sum_{j=1}^7v_j\omega_j.
\]
Thus $dh=\delta W_v$, and equation~\eqref{eq:star-delta} gives
$*dh=d(*W_v)$.

Conversely, suppose $*dh$ is exact. By Lemma~\ref{lem:potential}, choose
$\alpha\in\forms^1(\Om)$ such that
\[
 \delta d\alpha=-dh.
\]
Set $a=\varphi^{-1}(\alpha)$, $s=\Div\alpha$, and define
\begin{equation*}
 v=-\overline{D_{\mathbb{O}}} a+s.
\end{equation*}
Here $s$ is regarded as a real octonion-valued function. Directly from the
basis multiplication rules,
\[
 \operatorname{Re}(\overline{D_{\mathbb{O}}} a)
 =\sum_{j=0}^7\partial_j\alpha_j=s.
\]
Consequently $\operatorname{Re} v=0$. And,
\[
 \begin{aligned}
 \varphi(D_{\mathbb{O}}v)
 &=-\varphi(\Delta a)+\varphi(D_{\mathbb{O}}s)\\
 &=\Box\alpha+ds\\
 &=(d\delta+\delta d)\alpha-d\delta\alpha
 =\delta d\alpha=-du.
 \end{aligned}
\]
It follows that $\varphi(D_{\mathbb{O}}(h+v))=dh-dh=0$. Thus $F=h+v$ is regular and
has real part $h$.

Finally, exactness of $*dh$ implies
\[
 0=d(*dh)=(\Delta h)\,\mathrm{vol},
\]
so $h$ is harmonic.
\end{proof}

\begin{proof}[Proof of Theorem~\ref{thm:main}(1)]
Apply Proposition~\ref{prop:cohomology} with $n=8$,
and then use Theorem~\ref{thm:octonion-criterion}.
\end{proof}

\section{Prescribed components of monogenic functions}
Now we turn to the proof of the second part of the Theorem\ref{thm:main}. We first consider the regular module $\Cl$.
Under its identification with the exterior algebra,
a Clifford-valued function has a well-defined scalar
component.
\begin{theorem}\label{thm:clifford-criterion}
Let $\Omega\subset\R^n$ be a domain, with $n\ge2$, and let
$h\in C^\infty(\Omega,\R)$.
There exists a $\Cl$-valued monogenic function $G$ with
$\Sc G=h$ if and only if $*dh$ is exact.
Either condition implies that $h$ is harmonic.
Moreover, whenever such a completion exists, one may
choose
\[
G\in\A^0(\Omega)\oplus\A^2(\Omega).
\]
\end{theorem}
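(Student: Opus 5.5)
Since the regular module identifies $\Cl$-valued functions with inhomogeneous forms and, by \eqref{eq:clifford-dirac}, $D_{\Cl}=d+\delta$, I will work entirely with differential forms. Write $G=\sum_{k=0}^n G_k$ with $G_k\in\A^k(\Omega)$, so that $\Sc G=G_0$. The equation $(d+\delta)G=0$ splits by degree into
\[
dG_{k-1}+\delta G_{k+1}=0,\qquad k=0,\dots,n,
\]
with the conventions $G_{-1}=G_{n+1}=0$. The degree-one component reads $dG_0+\delta G_2=0$, and this single equation carries the whole argument.

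\emph{Necessity.} Suppose $G$ is monogenic and $\Sc G=h$. The degree-one equation gives $dh=\delta(-G_2)$. Lemma~\ref{lem:potential}, (2)$\Rightarrow$(1), applied with $\beta=dh$ and $A=-G_2$, shows that $*dh$ is exact; explicitly, \eqref{eq:star-delta} gives $*dh=d(*(-G_2))$. Exactness then yields $0=d(*dh)=(\Delta h)\,\vol$, so $h$ is harmonic. This computation is the same as the one at the end of the proof of Theorem~\ref{thm:octonion-criterion}.

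\emph{Sufficiency.} Assume $*dh$ is exact. I look for $G=h+A$ with $A\in\A^2(\Omega)$, which gives the stated form $G\in\A^0(\Omega)\oplus\A^2(\Omega)$. For this $G$ the monogenic equation reduces to three conditions:
\[
dh+\delta A=0\ \text{(degree 1)},\qquad dA=0\ \text{(degree 3)},\qquad \delta h=0\ \text{(degree }-1\text{)}.
\]
The last holds trivially, since $\delta$ kills functions. By Lemma~\ref{lem:potential}, (1)$\Rightarrow$(3), applied to $\beta=-dh$ (using that $*(-dh)$ is exact), there is $\alpha\in\A^1(\Omega)$ with $\delta d\alpha=-dh$. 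I then set $A=d\alpha$. This choice gives $dA=0$ automatically, and $\delta A=\delta d\alpha=-dh$ gives the degree-one equation. Hence $(d+\delta)(h+d\alpha)=0$, and $\Sc G=h$.

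\emph{Main obstacle.} The only point needing care is the bookkeeping: checking that $d+\delta$ on the regular module really decomposes by degree as above, and that the scalar part of $G$ is exactly $G_0$ under the exterior-algebra identification. This is precisely why the construction uses Lemma~\ref{lem:potential}(3), with $A$ exact, rather than (2): an arbitrary $A$ from (2) satisfies $\delta A=-dh$ but need not be closed, so the degree-three equation $dA=0$ could fail.
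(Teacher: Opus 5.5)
Your proposal is correct and is essentially the paper's proof. Necessity comes from the degree-one component $dh+\delta G_2=0$ together with \eqref{eq:star-delta}, and sufficiency from Lemma~\ref{lem:potential}(3); your choice $\delta d\alpha=-dh$, $G=h+d\alpha$ differs from the paper's $\delta d\alpha=dh$, $G=h-d\alpha$ only by $\alpha\mapsto-\alpha$, and your remark that the degree-three equation $dA=0$ forces $A$ to be exact is used only implicitly in the paper's computation $(d+\delta)(h-d\alpha)=dh-\delta d\alpha$.
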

\begin{proof}
Suppose that
\[
G=\sum_{k=0}^nG_k,\qquad
G_k\in\A^k(\Omega),\qquad G_0=h,
\]
is monogenic.
By \eqref{eq:clifford-dirac}, the degree-one component
of $D_{\Cl}G=0$ is
\[
dh+\delta G_2=0.
\]
Therefore
\[
*dh=-d(*G_2),
\]
so $*dh$ is exact.

Conversely, suppose that $*dh$ is exact.
By Lemma~\ref{lem:potential}, choose
$\alpha\in\A^1(\Omega)$ such that
\[
\delta d\alpha=dh.
\]
Set
\[
G=h-d\alpha.
\]
This function has only scalar and degree-two components,
and
\[
D_{\Cl}G
=(d+\delta)(h-d\alpha)
=dh-\delta d\alpha
=0.
\]
Thus $G$ is monogenic and $\Sc G=h$.

As before, exactness of $*dh$ implies $\Delta h=0$.
\end{proof}

Now we turn to the general Clifford module-valued monogenic functions cases. A general Clifford module has no distinguished scalar
component.
We therefore prescribe a real component by fixing a unit
vector.
\begin{theorem}\label{thm:module}
Let $\Omega\subset\R^n$ be a domain, with $n\ge2$.
Let $M$ be a nonzero real  $\Cl$-module and let
$s\in M$ be a fixed unit vector. For $h\in C^\infty(\Omega,\R)$, there exists a $M$-valued monogenic function $F$ with  $\langle F,s\rangle=h$ if and only if $*dh$ is exact.
\end{theorem}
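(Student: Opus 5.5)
The plan is to prove the two directions separately. Sufficiency will come from Theorem~\ref{thm:clifford-criterion} by pushing a $\Cl$-valued completion into $M$ through the orbit map of $s$. Necessity will come from an explicit two-form built from $F$ whose codifferential is $\pm dh$, followed by Lemma~\ref{lem:potential}.

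\textbf{Sufficiency.} Suppose $*dh$ is exact.
\begin{enumerate}
\item By Theorem~\ref{thm:clifford-criterion}, choose a monogenic $G=h+G_2$ with $G_2\in\A^2(\Omega)$.
\item Define $F=\rho(G)s$, where $\rho$ is extended to $\Cl$-valued functions pointwise and $\R$-linearly.
\item In the exterior-algebra model, $\mathbf e_j\cdot$ is Clifford left multiplication, so $D_{\Cl}G=\sum_j\mathbf e_j\,\partial_jG$ as a Clifford product. Since $\rho$ is an algebra homomorphism and $s$ is constant,
\[
D_MF=\sum_j\rho(\mathbf e_j)\rho(\partial_jG)s=\rho(D_{\Cl}G)s=0 .
\]
\item For $i\neq k$ we have $dx_i\wedge dx_k=\mathbf e_i\mathbf e_k$ in $\Cl$. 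The operator $\rho(\mathbf e_i\mathbf e_k)$ is skew-adjoint, because
\[
(\rho(\mathbf e_i)\rho(\mathbf e_k))^*=\rho(\mathbf e_k)\rho(\mathbf e_i)=-\rho(\mathbf e_i)\rho(\mathbf e_k).
\]
Hence $\langle\rho(G_2)s,s\rangle=0$.
\item Since $\|s\|=1$, this gives $\langle F,s\rangle=h\langle s,s\rangle=h$.
\end{enumerate}

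\textbf{Necessity.} Suppose $D_MF=0$ and $h=\langle F,s\rangle$.
\begin{enumerate}
\item Pair the equation with $\rho(\mathbf e_j)s$ and use skew-adjointness:
\[
0=\sum_i\langle\rho(\mathbf e_i)\partial_iF,\rho(\mathbf e_j)s\rangle=-\sum_i\langle\partial_iF,\rho(\mathbf e_i)\rho(\mathbf e_j)s\rangle .
\]
\item The term $i=j$ equals $\langle\partial_jF,s\rangle=\partial_jh$, because $\rho(\mathbf e_j)^2=-I$.
\item For $i\neq j$, set $A_{ij}=\langle F,\rho(\mathbf e_i)\rho(\mathbf e_j)s\rangle$ and $A_{ii}=0$. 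These are smooth and antisymmetric by the Clifford relations. Step 1 then reads
\[
\partial_jh=\sum_i\partial_iA_{ij}.
\]
\item With $A=\tfrac12\sum_{i,j}A_{ij}\,dx_i\wedge dx_j$, the formula $\delta(f\,dx_i\wedge dx_k)=\partial_kf\,dx_i-\partial_if\,dx_k$ from the proof of Lemma~\ref{lem:cayley} shows that $dh=\pm\delta A$.
\item By Lemma~\ref{lem:potential} ((2)$\Rightarrow$(1), applied to $\pm A$), $*dh$ is exact.
\end{enumerate}

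The main obstacle is the sign bookkeeping in the last step of necessity. It is harmless, since $A$ may be replaced by $-A$. The other point to check is that the identification in sufficiency is genuinely Clifford multiplication, so that $\rho$ intertwines $D_{\Cl}$ with $D_M$. This follows from the stated construction $\mathbf e_i\cdot=\wedge_{dx_i}-\iota_{\partial_i}$, which is exactly left Clifford multiplication on the exterior algebra.
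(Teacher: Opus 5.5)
Your proposal is correct and follows essentially the same route as the paper: sufficiency via $F=\rho(G)s$ with $G=h+G_2$ from Theorem~\ref{thm:clifford-criterion} and skew-adjointness of $\rho(G_2)$, and necessity via an antisymmetric two-form built from pairing $F$ against $\rho(\mathbf e_i)\rho(\mathbf e_j)$ and $s$, followed by Lemma~\ref{lem:potential}. Your $A_{ij}=\langle F,\rho(\mathbf e_i)\rho(\mathbf e_j)s\rangle$ is exactly the negative of the paper's $\langle\rho(\mathbf e_i)\rho(\mathbf e_j)F,s\rangle$, so you get $dh=-\delta A$ where the paper gets $dh=\delta A$; as you note, this sign difference does not matter.
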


\begin{proof}
For a Clifford module $M$, let  $\rho:C\ell_n\to\operatorname{End}_{\R}(M)$ be the canonical ring homomorphism.
Now suppose  $F\in C^\infty(\Om, M)$  is  monogenic with $\langle F,s\rangle=h$.
Define
\[
A_{ij}=\langle \rho(\mathbf{e}_i)\rho(\mathbf{e}_j)F,s\rangle\quad(i\ne j),
\qquad A_{ii}=0, \ \ \ i,j=1,\cdots, n.
\]
The Clifford relations give  $A_{ji}=-A_{ij}$, so
\[
A=\frac12\sum_{i,j}A_{ij}\,dx_i\wedge dx_j\in \forms^2(\Om).
\]
is a smooth 2-form.
For fixed $i\in\{1,\cdots,n\}$, according to  $D_{M}F=0$, we have
\[
0=\langle \rho(\mathbf{e}_i)D_MF,s\rangle
=\langle \rho(\mathbf{e}_i)\sum_{j=1}^n\rho(\mathbf{e}_j)\partial_jF,s\rangle=-\partial_i u+\sum_{j\ne i}\partial_jA_{ij}.
\]
On the other hand, we have that
\[
(\delta A)_i
=-\sum_j\partial_jA_{ji}
=\sum_j\partial_jA_{ij}.
\]
Thus $du=\delta A$, $*du$ is exact by Lemma \ref{lem:potential}.

Conversely, suppose that $*dh$ is exact.
By Theorem~\ref{thm:clifford-criterion}, there exists a $G\in C^\infty(\Om, C\ell_n)$ satisfies that
\[
G=h+Q,\qquad D_{C\ell_n}G=0,\qquad Q\in \forms^2(\Om).
\]
Set $F=\rho(G)s,$ then
\[
D_MF
=\sum_{j=1}^n\rho(\mathbf{e}_j)\partial_j\rho(G)s
=\rho(D_{C\ell_n}G)s=0.
\]
Noticed that when $i\ne j$
\[
(\rho(\mathbf{e}_i)\rho(\mathbf{e}_j))^*
=\rho(\mathbf{e}_j)^*\rho(\mathbf{e}_i)^*
=(-\rho(\mathbf{e}_j))(-\rho(\mathbf{e}_i))=-\rho(\mathbf{e}_i)\rho(\mathbf{e}_j).
\]
So $\rho(Q)=-\rho(Q)^*$ is skew-adjoint and
\[
\langle\rho(Q)s,s\rangle=0.
\]
Because $\rho$ is unital and $\|s\|=1$, we obtain
\[
\langle F,s\rangle
=h+\langle\rho(Q)s,s\rangle
=h.
\]
\end{proof}

\begin{proof}[Proof of Theorem~\ref{thm:main}(2)]
Combine Proposition~\ref{prop:cohomology}
with Theorem~\ref{thm:module}.
\end{proof}

\begin{remark}
  Here are two remarks before we end this paper.
  \begin{enumerate}
    \item The theorem also applies to any fixed nonzero continuous
real-linear functional $\ell:M\to\R$.
By the Riesz representation theorem,
\[
\ell(x)=\langle x,t\rangle
\]
for some nonzero $t\in M$.
Taking $s=t/\|t\|$ reduces the condition $\ell(F)=h$
to
\[
\langle F,s\rangle=h/\|t\|.
\]
Thus the same exactness criterion applies.
    \item  When $n=8$, there are two seemingly different descriptions of the condition
        $$H^7_{dR}(\Omega)=0.$$
        However, as shown in \cite{Li24}, $C\ell_8$ admits an explicit construction via the   octonion algebra, and under this construction, $\Oh\oplus \Oh$ is a $C\ell_8$-module.
         Thus octonionic regular functions can be regarded as a special case of monogenic functions taking values in a $C\ell_8$-module. The two descriptions are therefore intrinsically linked.
  \end{enumerate}
\end{remark}

\bigskip
\textbf{AI Statement} \ \
The authors used ChatGPT 6 Astra to improve the language and
presentation of the manuscript and to assist in developing
some of the mathematical arguments.
The authors take full responsibility for the content
of the manuscript.

\end{document}